\newtheorem{theorem}{Theorem}[section]
\newtheorem{lemma}[theorem]{Lemma}
\newtheorem{proposition}[theorem]{Proposition}
\newtheorem{corollary}[theorem]{Corollary}
\theoremstyle{definition}
\newtheorem{definition}[theorem]{Definition}
\numberwithin{equation}{section}
\author{Shohei Satake
\footnote{Graduate School of System Informatics, Kobe University, Rokkodai 1-1, Nada, Kobe, Japan. E-mail: 155x601x@stu.kobe-u.ac.jp.
}
}
\begin{document}

\title{A note on the relation between two properties of random graphs}

\maketitle

\begin{abstract}
The $t$-e.c. and pseudo-random property are typical properties of random graphs. 
In this note, we study the gap between them which has not been studied well. 
As a main result, we give the first explicit construction of infinite families of $t$-e.c. graphs which are not families of best possible pseudo-random graphs.
\end{abstract}
2010 MSC Classification: 05C25, 05C80\\
 \\
Keywords: graph eigenvalues, pseudo-random graphs, quadratic unitary Cayley graphs, $t$-e.c. graphs  

\section{Introduction}
{\it Erd\H{o}s-R\'{e}nyi random graphs} (or {\it random graphs}) are graphs on the vertex set $\{1, 2, \ldots, n\}$ which can be obtained by choosing edges independently with probability $p$ 
(see e.g. \cite{AS16}, \cite{B01}).
The probability $p$ is called {\it edge probability}. 
For a property $\mathcal{P}$, we say that random graphs {\it asymptotically almost surely} ({\it a.a.s}) satisfy $\mathcal{P}$ if the probability of the event that graphs satisfy 
$\mathcal{P}$ tends to $1$ when $n$ goes to infinity. 
One of important research in random graph theory is to investigate typical properties of random graphs which give measures how given deterministic graphs are like random graphs. 
At present, there are several types of such properties.
In this note, as noted in Cameron-Stark~\cite{CS02} and Bonato~\cite{B09}, we mainly focus on the $t$-existentially closed ($t$-e.c.) and pseudo-random property. 
Throughout of this note, a family $(G_i)_{i\geq 1}$ of graphs with $n_i$ vertices means an infinite sequence of graphs such that $n_i \rightarrow \infty$ when $i$ goes to infinity.
For positive functions $f$ and $g$ of positive integers, the notion $f=O(g)$ means that there exists a constant $C>0$ such that $f(i)\leq Cg(i)$ for sufficiently large $i$.
The notion $f=\Omega(g)$ means that there exists a constant $C>0$ such that $f(i)\geq Cg(i)$ for sufficiently large $i$.
Finally, the notion $f=o(g)$ means that $\lim_{i \rightarrow \infty}f(i)/g(i)=0$.
\par Let $t$ be a positive integer. 
A graph is called a {\it $t$-existentially closed} ({\it $t$-e.c.}){\it graph} 
if for any two disjoint subsets of vertex set, say $A$ and $B$, satisfying $|A \cup B|=t$,
there exists a vertex $z \notin A \cup B$ such that $z$ is adjacent to all vertices of $A$ but no vertices of $B$. 
Here $A$ or $B$ may be empty set. 
We also call this adjacency property the {\it $t$-e.c. property}. 
A simple probabilistic argument shows that for each constant $0<p<1$, random graphs with edge probability $p$ or $p \pm o(1)$ a.a.s. satisfy the $t$-e.c. property for any $t \geq1$. 
Moreover, for $t$-e.c. graphs, the more the value $t$ is, the more the graph is like random graphs.
\par The {\it pseudo-random property} is explained by the notion of ^^ ^^ jumbled graphs'' or ^^ ^^ bi-jumbled graphs'' which come from the works by Thomason~\cite{T87a}, \cite{T87s} and Kohayakawa et.al.~\cite{KRSSS07}.
In this note, we focus on the notion of bi-jumbled graphs which is actually more general than the one of jumbled graphs.
Let $0<p<1\leq \alpha$. 
Then a graph $G$ is called a {\it $(p, \alpha)$-bi-jumbled graph}
if for any subsets $U$ and $W$ of vertex set of $G$, 
\begin{align}
\label{eq:jumb}
\Bigl|e_{G}(U, W)-p\cdot |U|\cdot |W| \Bigr| \leq \alpha \cdot \sqrt{|U|\cdot |W|},
\end{align}
where $e_{G}(U, W)$ is the number of edges of $G$ such that one endpoint is in $U$ and the another one is in $W$. 
(Here each edge whose endpoints in $U\cap W$ is counted twice in $e_G(U, W)$).
It is well known that random graphs with $n$ vertices and edge probability $p_n$ are a.a.s. $(p_n, \alpha_n)$-bi-jumbled with $\alpha_n=O(\sqrt{n \cdot p_n})$ if $np_n \gg \log p_n$.
(see e.g. \cite[Section 2]{KRSSS07} and  \cite[Section 2.2]{KS06}).
Furthermore, Erd\H{o}s-Spencer~\cite{ES71} showed that the above magnitude of $\alpha_n$ is best possible as long as $p_n(1-p_n) \geq 1/n$ (see also \cite[Section 2]{KRSSS07}).
In this note, we call a family $(G_i)_{i\geq 1}$ {\it a family of best pseudo-random graphs} (with respect to edge probability $p_i$) if $G_i$ is $(p_i, O(\sqrt{n_ip_i}))$-bi-jumbled.
We also call the above property the {\it best pseudo-random property}. 
\par At present, the relation between the $t$-e.c. and pseudo-random property does not seem to be sufficiently clarified but some related observations have been obtained.
For example, Cameron-Stark~\cite{CS02} raised examples of families of graphs which are families of best pseudo-random graphs with respect to edge probability $1/2-o(1)$ but not $t$-e.c. for any $t \geq 4$.
This means that the best pseudo-random property does not necessarily imply the $t$-e.c. property.
We note that families of best pseudo-random graphs which are triangle-free are such examples since $t$-e.c. graphs must contain triangles whenever $t \geq 2$.
At present, various such families have been obtained (see e.g. \cite{A94}, \cite{S19}).
On the other hand, almost no results on the converse direction have been obtained.
Indeed, almost all known explicit families of $t$-e.c. graphs are also families of best pseudo-random graphs and other several families are quite unclear whether they are families of best pseudo-random graphs or not (see e.g. \cite{B09}). 
It is also non-trivial to construct infinite families of $t$-e.c. graphs for large $t$. 
\par In this note, for any $t\geq 1$, we explicitly construct many families of $t$-e.c. graphs but not families of best pseudo-random graphs with respect to edge probability $1/2-o(1)$.
We also show that the constructed families are actually special classes of {\it quadratic unitary Cayley graphs} proposed by de Beaudrap~\cite{B10} and Liu-Zhou~\cite{LZ15}. 
\par The rest of this note is organized as follows.
In Section~\ref{sec:prop}, we explain that the pseudo-random property of regular graphs can be described by graph eigenvalues.
In Section~\ref{sec:const}, we give a construction providing many families of $t$-e.c. graphs which are not families of best pseudo-random graphs for every $t \geq 1$ without probabilistic arguments.
In Section~\ref{sec:conclusion}, we make some concluding remarks.

\section{The pseudo-random property of regular graphs and graph eigenvalues}
\label{sec:prop}
In this section, we explain that the pseudo-random property of families of regular graphs can be described by eigenvalues of their adjacency matrices.
The {\it adjacency matrix} of a graph on the vertex set $\{1, 2, \ldots, n\}$ is the $(0, 1)$-square matrix of order $n$ such that the $(i, j)$-entry is $1$ if and only if $i$ and $j$ are adjacent.
Let $(G_i)_{i \geq 1}$ be a family of regular graphs with $n_i$ vertices such that the degree of $G_i$ is $d_i$. 
Let $d_i=\lambda_1(G_i) \geq \lambda_2(G_i) \geq \cdots \geq \lambda_{n_i}(G_i)$ 
are eigenvalues of its adjacency matrix.
We define $\lambda(G_i):=\max\{\lambda_2(G_i), -\lambda_{n_i}(G_i)\}$.
Then the following two lemmas hold.
 
\begin{lemma}[Expander-mixing lemma, e.g. Alon-Spencer~\cite{AS16}]
\label{lem:exp}
Then for any subsets $U$, $W$ of vertex set of $G_i$, 
\begin{equation}
\Bigl|e_{G_i}(U, W)-\frac{d_i}{n_i}\cdot|U|\cdot |W| \Bigr|\leq \lambda(G_i) \cdot \sqrt{|U|\cdot |W|}.
\end{equation}
\end{lemma}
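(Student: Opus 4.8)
The plan is to exploit the spectral decomposition of the adjacency matrix $A$ of $G_i$. Because $G_i$ is $d_i$-regular, the all-ones vector is an eigenvector of $A$ with eigenvalue $d_i=\lambda_1(G_i)$, and since $A$ is real symmetric the spectral theorem supplies an orthonormal basis of eigenvectors $v_1,\ldots,v_{n_i}$ with $v_1=\mathbf{1}/\sqrt{n_i}$ and $Av_k=\lambda_k(G_i)v_k$. The starting observation is that, writing $\chi_U$ and $\chi_W$ for the characteristic vectors of $U$ and $W$, the counting convention adopted above gives exactly $e_{G_i}(U,W)=\chi_U^{\top}A\chi_W$.

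First I would expand $\chi_U=\sum_k\alpha_k v_k$ and $\chi_W=\sum_k\beta_k v_k$ in this basis, so that $\chi_U^{\top}A\chi_W=\sum_k\lambda_k(G_i)\alpha_k\beta_k$. Computing the leading coefficients gives $\alpha_1=\langle\chi_U,v_1\rangle=|U|/\sqrt{n_i}$ and likewise $\beta_1=|W|/\sqrt{n_i}$, so the $k=1$ term equals $(d_i/n_i)\,|U|\,|W|$, which is precisely the quantity subtracted in the statement. Hence the error $e_{G_i}(U,W)-(d_i/n_i)|U||W|$ equals the tail sum $\sum_{k\geq 2}\lambda_k(G_i)\alpha_k\beta_k$.

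The remaining step is to bound this tail. Since $|\lambda_k(G_i)|\leq\lambda(G_i)$ for every $k\geq 2$ by the definition of $\lambda(G_i)$, I would factor out $\lambda(G_i)$ and then apply the Cauchy--Schwarz inequality to $\sum_{k\geq 2}|\alpha_k||\beta_k|$, bounding it by $(\sum_k\alpha_k^2)^{1/2}(\sum_k\beta_k^2)^{1/2}=\|\chi_U\|\,\|\chi_W\|=\sqrt{|U|}\,\sqrt{|W|}$; the last equality uses orthonormality of the basis together with the fact that the squared norm of a characteristic vector is the size of its set. Combining these estimates yields the claimed inequality.

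I expect the only real subtlety to be bookkeeping rather than a genuine obstacle. One must invoke regularity to identify $v_1$ with the normalized all-ones vector, so that the leading spectral term reproduces the main term $(d_i/n_i)|U||W|$ exactly; and one must discard the $k=1$ term \emph{before} applying the uniform bound $|\lambda_k(G_i)|\leq\lambda(G_i)$, which is legitimate precisely because $\lambda_1(G_i)=d_i$ is excluded from the definition of $\lambda(G_i)$.
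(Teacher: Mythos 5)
Your proposal is correct and complete: the identification $e_{G_i}(U,W)=\chi_U^{\top}A\chi_W$ matches the paper's counting convention (edges inside $U\cap W$ counted twice), the leading term is extracted correctly using regularity, and the bound $|\lambda_k(G_i)|\leq\max\{\lambda_2(G_i),-\lambda_{n_i}(G_i)\}$ for $k\geq 2$ together with Cauchy--Schwarz gives exactly the stated inequality. Note that the paper itself supplies no proof of this lemma --- it is quoted as a known result from Alon--Spencer --- and your argument is precisely the standard spectral-decomposition proof found in that reference, so there is nothing to contrast.
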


\begin{lemma}[A converse of the expander-mixing lemma, Bilu-Linial~\cite{BL06}]
\label{lem:cexp}
Suppose that for any disjoint subsets $U$, $W$ of vertex set of $G_i$, there exists $\rho_i>0$ such that
\begin{equation}
\Bigl|e_{G_i}(U, W)-\frac{d_i}{n_i} \cdot |U|\cdot |W| \Bigr|\leq \rho_i \cdot \sqrt{|U|\cdot |W|}.
\end{equation}
Then, $\lambda(G_i)=O(\rho_i(1+\log(d_i/\rho_i))$. 
\end{lemma}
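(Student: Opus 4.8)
The plan is to establish this converse exactly along the lines of Bilu--Linial, through a discrepancy-to-spectrum argument resting on a dyadic decomposition of eigenvectors. Since $G_i$ is $d_i$-regular, the all-ones vector $\mathbf{1}$ is the Perron eigenvector for $\lambda_1(G_i)=d_i$, and on its orthogonal complement the adjacency matrix $A$ of $G_i$ is symmetric with spectral radius exactly $\lambda(G_i)=\max\{\lambda_2(G_i),-\lambda_{n_i}(G_i)\}$. Hence $\lambda(G_i)=\max\{|v^{\top}Av| : \|v\|_2=1,\ v\perp\mathbf{1}\}$, and it suffices to bound $|v^{\top}Av|$ for a fixed unit vector $v$ orthogonal to $\mathbf{1}$. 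Writing $v=v^{+}-v^{-}$ for its nonnegative positive and negative parts and using symmetry of $A$, we have $v^{\top}Av=(v^{+})^{\top}Av^{+}-2(v^{+})^{\top}Av^{-}+(v^{-})^{\top}Av^{-}$, so the task reduces to estimating bilinear forms $x^{\top}Ay=\sum_{i\sim j}x_iy_j$ for nonnegative vectors $x,y$.

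The core step is to control $x^{\top}Ay$ by the discrepancy hypothesis. I would partition the supports of $x$ and $y$ into dyadic level sets $X_s=\{i:2^{-s}<x_i\le 2^{-s+1}\}$ and $Y_t$ defined analogously, so that $x_i$ is within a factor $2$ of $2^{-s}$ on $X_s$ and $\sum_{i\in X_s}x_i^2$ is comparable to $2^{-2s}|X_s|$. Then
\[
x^{\top}Ay=\sum_{s,t}\sum_{i\in X_s,\, j\in Y_t}A_{ij}x_iy_j \quad\text{is comparable to}\quad \sum_{s,t}2^{-s}2^{-t}\,e_{G_i}(X_s,Y_t).
\]
Applying the hypothesis to each disjoint pair $(X_s,Y_t)$ splits the right-hand side into a main term $\tfrac{d_i}{n_i}\sum_{s,t}2^{-s}2^{-t}|X_s||Y_t|$, comparable to $\tfrac{d_i}{n_i}(\mathbf{1}^{\top}x)(\mathbf{1}^{\top}y)$, and an error term of order $\rho_i\sum_{s,t}2^{-s}2^{-t}\sqrt{|X_s||Y_t|}$. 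Reassembling the three pieces of $v^{\top}Av$, the main terms combine to $\tfrac{d_i}{n_i}\bigl(\mathbf{1}^{\top}v^{+}-\mathbf{1}^{\top}v^{-}\bigr)^2=\tfrac{d_i}{n_i}(\mathbf{1}^{\top}v)^2=0$ by orthogonality to $\mathbf{1}$, so only the error term survives.

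It remains to bound the error $\rho_i\sum_{s,t}2^{-s}2^{-t}\sqrt{|X_s||Y_t|}$. Setting $a_s:=2^{-s}\sqrt{|X_s|}$, which is comparable to $\|x|_{X_s}\|_2$, and $b_t$ analogously, this factors as $\rho_i\,(\sum_s a_s)(\sum_t b_t)$. Since $\sum_s a_s^2=\|x\|_2^2$ and only $S$ scales are nonempty, Cauchy--Schwarz gives $\sum_s a_s\le\sqrt{S}\,\|x\|_2$, so the error is $O(\rho_i\,S\,\|x\|_2\|y\|_2)=O(\rho_i S)$ once the positive and negative pieces are summed (using $\|v^{\pm}\|_2\le 1$). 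Thus a single logarithmic factor enters precisely as the number $S$ of relevant dyadic scales, and the conclusion $\lambda(G_i)=O(\rho_i(1+\log(d_i/\rho_i)))$ follows if $S=O(1+\log(d_i/\rho_i))$.

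The step I expect to be the main obstacle --- and the only genuinely delicate one --- is showing that the number of effective scales can be taken to be $S=O(1+\log(d_i/\rho_i))$ rather than the crude $O(\log(n_id_i/\rho_i))$. This requires truncating the coordinates of $v$ of very small magnitude and arguing that their total contribution to $v^{\top}Av$ is only $O(\rho_i)$. Estimating that contribution from $d_i$-regularity alone loses a spurious factor involving $n_i$; to avoid it one must feed the discrepancy hypothesis back in, combining it with the trivial bound $e_{G_i}(S,T)\le d_i\min(|S|,|T|)$ on the smallest level sets, so that the geometric decay of $2^{-s}$ against the controlled growth of $|X_s|$ caps the summation at $\log(d_i/\rho_i)$ scales. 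A secondary technicality is that the hypothesis is stated only for disjoint $U,W$, so the diagonal same-scale contributions $e_{G_i}(X_s,X_s)$ must be handled by splitting each $X_s$ into two disjoint halves and applying the hypothesis to the halves; this costs only a constant factor and does not affect the final order of magnitude.
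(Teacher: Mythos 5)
First, a point of comparison: the paper contains no proof of this lemma at all; it is imported verbatim from Bilu--Linial \cite{BL06}, so your proposal can only be measured against that original argument, whose skeleton (positive/negative parts, dyadic level sets, discrepancy applied to pairs of level sets, a halving trick for the non-disjoint same-scale pairs, the degree bound for remaining scales) you have correctly identified. Nevertheless there are two genuine gaps. The first is the cancellation of main terms, which fails as written. You replace $x^{\top}Ay$ by $\sum_{s,t}2^{-s-t}e_{G_i}(X_s,Y_t)$ only up to a multiplicative constant (each coordinate is known only within a factor $2$ of its dyadic value), and only afterwards split each $e_{G_i}(X_s,Y_t)$ into $\frac{d_i}{n_i}|X_s||Y_t|$ plus error. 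Consequently the three main terms arising from $(v^{+})^{\top}Av^{+}$, $(v^{+})^{\top}Av^{-}$ and $(v^{-})^{\top}Av^{-}$ are each determined only up to distinct unknown factors in $[1,4]$, so they cannot be recombined into $\frac{d_i}{n_i}(\mathbf{1}^{\top}v)^{2}=0$; the uncancelled residue is of order $\frac{d_i}{n_i}(\mathbf{1}^{\top}v^{\pm})^{2}$, which can be as large as $d_i$ and swamps the bound you are after. The repair, which is how \cite{BL06} actually proceeds, is to subtract the rank-one part \emph{before} any approximation: set $B:=A-\frac{d_i}{n_i}J$ with $J$ the all-ones matrix, use $v^{\top}Av=v^{\top}Bv$ exactly for $v\perp\mathbf{1}$, and read the hypothesis as $|\mathbf{1}_U^{\top}B\mathbf{1}_W|\leq\rho_i\sqrt{|U|\,|W|}$ for disjoint $U,W$; then every later step is an absolute-value estimate and no cancellation is needed. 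This forces a secondary repair: since $B$ has entries of both signs, you may no longer round coordinates ``up to constants'' inside $x_s^{\top}By_t$; instead write each level-set restriction exactly as a nonnegative combination (layer-cake) of indicators of its super-level sets, which remain disjoint from those of the other vector, and apply the hypothesis to each indicator pair.

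The second gap is the step you yourself flag as the main obstacle, which your proposal leaves unresolved, and whose intended resolution (truncate tiny coordinates, then bound the number $S$ of surviving scales by $O(1+\log(d_i/\rho_i))$) cannot be closed in that form: any truncation whose error is controlled by $d_i$-regularity alone forces a threshold of order $\rho_i/(d_i\sqrt{n_i})$ and hence $S=\Theta(\log(n_id_i/\rho_i))$. What works, and avoids truncation entirely, is a case analysis per \emph{pair} of scales rather than a cap on the number of scales. Writing $a_s:=2^{-s}\sqrt{|X_s|}$ (so $\sum_s a_s^2\leq 1$) and $L:=\lceil\log_2(d_i/\rho_i)\rceil$, each pair obeys both the discrepancy bound $\rho_i a_s a_t$ and the trivial bound
\[
2^{-s-t}\,d_i\min(|X_s|,|Y_t|)\;\leq\; d_i\,a_{\min(s,t)}^{2}\,2^{-|s-t|}.
\]
Pairs with $|s-t|\leq L$ use the former; there are only $O(L)$ of them per scale, contributing $O(\rho_i L\sum_s a_s^2)=O(\rho_i L)$ in total. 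Pairs with $|s-t|>L$ use the latter, which sums geometrically to $O(d_i2^{-L}a_s^2)=O(\rho_i a_s^2)$ per scale, hence $O(\rho_i)$ overall. The number of scales never enters, no factor of $n_i$ appears, and the bound $O(\rho_i(1+\log(d_i/\rho_i)))$ follows. With these two corrections your outline becomes a faithful reconstruction of the Bilu--Linial proof.
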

From these lemmas, we obtain the following corollary.
\begin{corollary}
\label{cor:exp}
\begin{enumerate}
\item[(1)] For each $i \geq 1$, $G_i$ is $(d_i/n_i, \lambda(G_i))$-bi-jumbled. Thus, $(G_i)_{i \geq 1}$ is a family of best pseudo-random graphs with respect to edge probability $d_i/n_i$ if $\lambda(G_i)=O(\sqrt{d_i})$. 
\item[(2)] Let $\varepsilon>0$ be a fixed real number.
 Assume that $d_i \rightarrow \infty$ when $i \rightarrow \infty$ and $\lambda(G_i)= \Omega(d_i^{1/2+\varepsilon})$. 
Then, $(G_i)_{i \geq 1}$ is not a family of best pseudo-random graphs with respect to edge probability $d_i/n_i$.
\end{enumerate} 
\end{corollary}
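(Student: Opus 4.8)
The plan is to derive both parts directly from the two lemmas of this section together with the definition of the best pseudo-random property, so that no probabilistic input is needed at all.

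For part~(1) I would first observe that the bi-jumbled inequality \eqref{eq:jumb} with $p=d_i/n_i$ and $\alpha=\lambda(G_i)$ is literally the conclusion of the expander-mixing lemma (Lemma~\ref{lem:exp}); hence $G_i$ is $(d_i/n_i,\lambda(G_i))$-bi-jumbled with no work. To convert this into the best pseudo-random property I would compute $\sqrt{n_i\cdot(d_i/n_i)}=\sqrt{d_i}$, so that being a family of best pseudo-random graphs with respect to $d_i/n_i$ means exactly being $(d_i/n_i,O(\sqrt{d_i}))$-bi-jumbled. The hypothesis $\lambda(G_i)=O(\sqrt{d_i})$ then upgrades the jumbledness parameter $\lambda(G_i)$ to the required $O(\sqrt{d_i})$, finishing this part.

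For part~(2) I would argue by contradiction. Suppose $(G_i)_{i\ge 1}$ were a family of best pseudo-random graphs with respect to $d_i/n_i$; then $G_i$ is $(d_i/n_i,\beta_i)$-bi-jumbled for some $\beta_i=O(\sqrt{d_i})$. Since the inequality then holds in particular for all disjoint $U,W$, I may apply the converse expander-mixing lemma (Lemma~\ref{lem:cexp}) with $\rho_i=\beta_i$ to obtain $\lambda(G_i)=O\bigl(\beta_i(1+\log(d_i/\beta_i))\bigr)$. Because $\beta_i\ge 1$ (the bi-jumbled parameter always satisfies $\alpha\ge 1$) and $d_i\to\infty$, the factor $\log(d_i/\beta_i)$ is $O(\log d_i)$, whence $\lambda(G_i)=O(\sqrt{d_i}\,\log d_i)$. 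Combining this with the hypothesis $\lambda(G_i)=\Omega(d_i^{1/2+\varepsilon})$ would force $d_i^{\varepsilon}=O(\log d_i)$, which is impossible as $d_i\to\infty$ since a fixed positive power of $d_i$ eventually dominates $\log d_i$. This contradiction shows $(G_i)_{i\ge 1}$ cannot be a family of best pseudo-random graphs.

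The main obstacle, and the only genuinely delicate point, is the logarithmic loss built into Lemma~\ref{lem:cexp}: unlike the clean one-sided expander-mixing lemma, its converse is lossy by a factor $1+\log(d_i/\rho_i)$, so I must ensure this factor stays strictly subpolynomial in $d_i$ and therefore cannot absorb the polynomial gap $d_i^{\varepsilon}$. This is precisely why the statement demands the strictly positive exponent $\varepsilon$ in $\Omega(d_i^{1/2+\varepsilon})$ rather than merely $\lambda(G_i)\neq O(\sqrt{d_i})$: the extra polynomial slack is exactly what survives the logarithmic loss coming from the converse lemma.
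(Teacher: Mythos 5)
Your proposal is correct and follows essentially the same route as the paper: part (1) is read off directly from the expander-mixing lemma (Lemma~\ref{lem:exp}) after noting $\sqrt{n_i\cdot d_i/n_i}=\sqrt{d_i}$, and part (2) is the same contradiction argument via the Bilu--Linial converse (Lemma~\ref{lem:cexp}) yielding $\lambda(G_i)=O(\sqrt{d_i}\log d_i)$, incompatible with $\lambda(G_i)=\Omega(d_i^{1/2+\varepsilon})$. In fact your write-up is more careful than the paper's, since you justify explicitly why the factor $1+\log(d_i/\rho_i)$ is $O(\log d_i)$ (using $\rho_i\geq 1$) and why a fixed power $d_i^{\varepsilon}$ cannot be absorbed by a logarithm.
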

\begin{proof}
The statement of $(1)$ is a direct consequence of Lemma~\ref{lem:exp}.
We prove $(2)$.
Now assume that $(G_i)_{i \geq 1}$ is a family of best pseudo-random graphs. Then by Lemma~\ref{lem:cexp}, 
$\lambda(G_i)=O(\sqrt{d_i} \cdot \log d_i)$, which contradicts the assumption of (2). 
\end{proof}
We note that if $d_i \leq (1-\delta)n_i$ for some fixed $\delta>0$, then $\lambda(G_i)\geq \Omega(\sqrt{d_i})$ (see e.g. \cite[Section 2.4]{KS06}).
Thus a family of best pseudo-random regular graphs are best possible up to constant in the sense of graph eigenvalues. 
　

\section{An explicit construction}
\label{sec:const}
In this section, we prove the following main theorem. 
\begin{theorem}
\label{thm:mainthm}
For each integer $t \geq 1$ and odd integer $e\geq 3$, let $(q_i)_{i \geq 1}$ be the sequence of consecutive Pythagorean primes such that 
$q_1$ is the least Pythagorean prime satisfying 
\[
q_1^e-(t2^{t-1}-2^t+1)q_1^{e-\frac{1}{2}}-t2^tq_1^{e-1}+t2^{t-1}>0.
\]
Then one can explicitly construct a family $(G_i)_{i \geq 1}$ of $t$-e.c. regular graphs which is not a family of best pseudo-random graphs with respect to edge probability $1/2-o(1)$.
Here $G_i$ is a $(q_i^e-q_i^{e-1})/2$-regular graph with $q_i^e$ vertices.
\end{theorem}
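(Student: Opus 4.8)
The plan is to realize each $G_i$ as an explicit abelian Cayley graph and then handle the two required properties by Fourier/character analysis on the underlying group. Concretely, I would take the vertex set to be the additive group of the order-$q_i^e$ object carrying the quadratic unitary structure and let the connection set $S_i$ be the associated quadratic-residue set, of size $(q_i^e-q_i^{e-1})/2$. Since $q_i$ is a Pythagorean prime, $-1$ is a square, so $S_i=-S_i$ and the resulting Cayley graph is undirected and $(q_i^e-q_i^{e-1})/2$-regular on $q_i^e$ vertices; in particular its edge probability is $d_i/n_i=\tfrac12-\tfrac1{2q_i}=\tfrac12-o(1)$ and $d_i\to\infty$. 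This reduces everything to estimating additive character sums over the group.

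For the pseudo-random side I would compute (enough of) the spectrum. As $G_i$ is an abelian Cayley graph, each eigenvalue equals $\widehat{S_i}(\psi)=\sum_{s\in S_i}\psi(s)$ for an additive character $\psi$, and these are quadratic Gauss/Salié sums attached to $S_i$. The key step is to exhibit one nontrivial character $\psi_0$ — the one aligned with the index-$q_i$ substructure — whose sum fails full square-root cancellation and instead has magnitude $\Omega(q_i^{(e+1)/2})$, a factor $\sqrt{q_i}$ above the threshold $\sqrt{d_i}=\Theta(q_i^{e/2})$ of genuine cancellation. This forces $\lambda(G_i)=\Omega(d_i^{1/2+\varepsilon})$ for a fixed $\varepsilon>0$ (any $\varepsilon\le \tfrac1{2e}$ works since $d_i=\Theta(q_i^e)$), whence Corollary~\ref{cor:exp}(2) immediately yields that $(G_i)_{i\ge1}$ is not a family of best pseudo-random graphs with respect to edge probability $\tfrac12-o(1)$. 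Only a lower bound on $\lambda(G_i)$ is needed, so the full spectrum is unnecessary — the single distinguished character suffices.

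The $t$-e.c. property is the substantive part. Fix disjoint $A,B$ with $|A\cup B|=t$ and put $f(x)=2\,\mathbf 1_{S_i}(x)-1$ for $x\neq 0$; the number of admissible witnesses is
\[
N=\frac1{2^t}\sum_{z\notin A\cup B}\ \prod_{a\in A}\bigl(1+f(z-a)\bigr)\prod_{b\in B}\bigl(1-f(z-b)\bigr).
\]
Expanding the product turns $N$ into the main term $n_i/2^t$ plus signed correlation sums $\sum_{z}\prod_{v\in J}f(z-v)$ over nonempty $J\subseteq A\cup B$. The singletons contribute only the regularity defect $\sum_z f(z-v)=2d_i-n_i=-q_i^{e-1}$ (the source of the $q_i^{e-1}$ terms), while excluding $z\in A\cup B$ and the deviation of $p$ from $\tfrac12$ produce the integer constant $t2^{t-1}$; everything else reduces to bounding $\bigl|\sum_z\prod_{v\in J}f(z-v)\bigr|$ for $|J|\ge 2$.

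This last estimate is the hard part, and it is where the delicate balance lives. Because a single eigenvalue is large I cannot invoke the expander-mixing lemma; moreover, twins — which would already destroy the $2$-e.c. property — correspond precisely to a correlation sum reaching $n_i$, so the whole point is that $S_i$ has real cancellation in \emph{all} higher correlations despite its large eigenvalue. I would expand each $\prod_{v\in J}f(z-v)$ through the Gauss-sum expansion of the quadratic character, complete the square in $z$ to reduce the inner sum to a quadratic Gauss sum of modulus $q_i^{e/2}$, and then control the outer sum over the auxiliary $\mathbb F_{q_i}$-parameters, isolating the degenerate parameter locus; this should yield the uniform bound $\bigl|\sum_z\prod_{v\in J}f(z-v)\bigr|\le q_i^{\,e-1/2}$. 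Inserting these bounds with the binomial bookkeeping $\binom{t}{k}$ gives
\[
N\ \ge\ \frac1{2^t}\Bigl(q_i^{e}-(t2^{t-1}-2^t+1)\,q_i^{\,e-1/2}-t2^t\,q_i^{\,e-1}+t2^{t-1}\Bigr),
\]
which is positive exactly under the displayed inequality. Finally I would verify that the right-hand polynomial, once positive, stays positive and increasing, so that taking $q_1$ to be the least Pythagorean prime satisfying it guarantees $N>0$ for every term of the sequence of consecutive Pythagorean primes, proving each $G_i$ is $t$-e.c. The main obstacle is squarely the correlation bound above: $S_i$ must be structured enough to create the large eigenvalue yet pseudo-random enough (no near-twins) to retain the $t$-e.c. property.
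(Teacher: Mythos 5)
Your proposal breaks at exactly the step you deferred as ``the hard part,'' and the break is not repairable. The uniform correlation bound $\bigl|\sum_z\prod_{v\in J}f(z-v)\bigr|\le q_i^{\,e-1/2}$ for $|J|\ge 2$ is false, and no Gauss-sum expansion or completion of squares can rescue it. The reason is that $f$ (equivalently $\chi_{q^e}$) has conductor $q$: by Hensel's lemma (as in Proposition~\ref{prop:square}), membership of $x$ in $S_i=Q_{\mathbb{Z}_{q^e}}$ depends only on $x \bmod q$. Hence if $v_1\neq v_2$ but $v_1\equiv v_2\pmod q$ --- such pairs exist as soon as $e\ge 2$, e.g. $v_1=0$, $v_2=q$ --- then $f(z-v_1)=f(z-v_2)$ for every $z$, so the pair correlation $\sum_{z\notin\{v_1,v_2\}}f(z-v_1)f(z-v_2)$ equals $n_i-2$ (each factor squares to $1$), not $O(q_i^{\,e-1/2})$. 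Worse, such $v_1,v_2$ are non-adjacent twins in $G_i$: a vertex $z$ is adjacent to $v_1$ if and only if it is adjacent to $v_2$, because $G_{q^e}$ is precisely a blow-up of the Paley graph $G_q$ by independent sets of size $q^{e-1}$. Taking $A=\{v_1\}$, $B=\{v_2\}$ shows $G_i$ is not even $2$-e.c., so the statement you are trying to prove is itself false for every $t\ge 2$ (for $t=1$ it holds trivially by regularity). Your own parenthetical warning --- that twins correspond to a correlation sum reaching $n_i$ and would already destroy the $2$-e.c. property --- is not a hypothetical to be argued away; it is exactly what occurs in this construction.

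Two remarks for calibration. First, your pseudo-random half is correct and leaner than the paper's: the paper computes the full spectrum (Theorem~\ref{thm:LZ15}), whereas you only need the single conductor-$q$ character, whose eigenvalue $(q^{e-1/2}-q^{e-1})/2$ is indeed $\Omega(q^{(e+1)/2})$, which suffices for Corollary~\ref{cor:exp}~(2). Second, the paper's own proof of the $t$-e.c.\ property founders on the same rock as yours: Lemma~\ref{lem:weil} is false as stated, because reducing the character sum over $\mathbb{Z}_{q^e}$ to $q^{e-1}$ times a sum over $\mathbb{Z}_q$ and then applying the Weil-type bound requires $a_1,\dots,a_k$ to be distinct \emph{modulo $q$}, while in the application (Lemma~\ref{lem:g(A,B)} and Theorem~\ref{thm:main}) the elements of $A\cup B$ are only distinct in $\mathbb{Z}_{q^e}$. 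So your diagnosis of where the difficulty lives was exactly right; the difficulty is genuine, and for this construction it cannot be overcome.
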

\noindent This theorem can be directly obtained from Theorem~\ref{thm:main} and \ref{thm:LZ15} shown later. 
\par First, we explain the construction.
Let $q$ be a Pythagorean prime, that is, a prime of the form $q \equiv 1 \pmod{4}$. 
By the Dirichlet's theorem, there are infinitely many such primes.
Let $e\geq 1$ be an odd integer. 
Then we construct Cayley graphs over the additive group of the residue ring $\mathbb{Z}_{q^e}:=\mathbb{Z}/q^e\mathbb{Z}$ as follows.

\begin{definition}
\label{def:const}
The graph $G_{q^e}$ is the graph with vertex set $\mathbb{Z}_{q^e}$ and 
edge set $\{\{x, y\} \mid \chi_{q^e}(x-y)=1\}$, where $\chi_{q^e}(x):=(\frac{x}{q^e})=(\frac{x}{q})^e$, where $(\frac{x}{q})$ is the Legendre symbol.
\end{definition}
Since $q \equiv 1 \pmod{4}$, $G_{q^e}$ is well-defined.   
When $e=1$, the graph $G_{q}$ is the Paley graphs with $q$ vertices.
By the following proposition, we see that $G_{q^e}$ is a special type of quadratic unitary Cayley graphs proposed by de Beaudrap~\cite{B10} and Liu-Zhou~\cite{LZ15}.

\begin{definition}
Let $R$ be a finite commutative ring  and let $Q_R$ be the set of squares of unit elements of $R$.
Let $T_R:=Q_R \cup (-Q_R)$.
The {\it quadratic unitary Cayley graph} $Cay(R, T_R)$ is the graph with vertex set $R$ such that two elements $x$ and $y$ are adjacent if and only if $x-y \in T_R$.
\end{definition}

\begin{proposition}
\label{prop:square}
$G_{q^e}$ is the quadratic unitary Cayley graph $Cay(\mathbb{Z}_{q^e}, T_{\mathbb{Z}_{q^e}})$.
\end{proposition}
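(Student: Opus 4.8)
The plan is to show that $G_{q^e}$ and $Cay(\mathbb{Z}_{q^e},T_{\mathbb{Z}_{q^e}})$ share the same vertex set $\mathbb{Z}_{q^e}$, which is immediate, and the same edge set, which reduces to the pointwise identity
\[
\chi_{q^e}(z)=1 \iff z\in T_{\mathbb{Z}_{q^e}}\qquad\text{for all } z\in\mathbb{Z}_{q^e}.
\]
Once this is in hand, an unordered pair $\{x,y\}$ is an edge of $G_{q^e}$ exactly when $\chi_{q^e}(x-y)=1$, hence exactly when $x-y\in T_{\mathbb{Z}_{q^e}}$, i.e. exactly when $\{x,y\}$ is an edge of $Cay(\mathbb{Z}_{q^e},T_{\mathbb{Z}_{q^e}})$.

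The first key step is to describe $Q_{\mathbb{Z}_{q^e}}$ concretely. The unit group $\mathbb{Z}_{q^e}^{*}$ consists of the residues coprime to $q$ and is cyclic of even order $q^{e-1}(q-1)$; hence its set of squares $Q_{\mathbb{Z}_{q^e}}$ is the unique index-$2$ subgroup, namely the even powers of any fixed generator. I would then compare squaring in $\mathbb{Z}_{q^e}^{*}$ with squaring in $\mathbb{Z}_{q}^{*}$ via the reduction homomorphism $\mathbb{Z}_{q^e}^{*}\to\mathbb{Z}_{q}^{*}$. This map is surjective with kernel of odd order $q^{e-1}$, so it sends a generator of $\mathbb{Z}_{q^e}^{*}$ to a generator of $\mathbb{Z}_{q}^{*}$; writing a unit as $g^{k}$, membership in $Q_{\mathbb{Z}_{q^e}}$ and being a square modulo $q$ are both governed by the parity of $k$, and this parity is preserved since both group orders are even. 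Consequently a unit $z$ lies in $Q_{\mathbb{Z}_{q^e}}$ if and only if $z\bmod q$ is a nonzero quadratic residue, i.e. if and only if $\left(\frac{z}{q}\right)=1$.

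Next I would read off $\chi_{q^e}$. Since $e$ is odd and $\left(\frac{z}{q}\right)\in\{-1,0,1\}$, we have $\chi_{q^e}(z)=\left(\frac{z}{q}\right)^{e}=\left(\frac{z}{q}\right)$, so $\chi_{q^e}(z)=1$ precisely when $z\bmod q$ is a nonzero quadratic residue, which by the previous step is precisely $z\in Q_{\mathbb{Z}_{q^e}}$. It then remains to upgrade $Q_{\mathbb{Z}_{q^e}}$ to $T_{\mathbb{Z}_{q^e}}=Q_{\mathbb{Z}_{q^e}}\cup(-Q_{\mathbb{Z}_{q^e}})$. Here I invoke the defining congruence $q\equiv 1\pmod 4$, which yields $\left(\frac{-1}{q}\right)=1$; by the criterion just obtained this gives $-1\in Q_{\mathbb{Z}_{q^e}}$, and since $Q_{\mathbb{Z}_{q^e}}$ is a subgroup this forces $-Q_{\mathbb{Z}_{q^e}}=Q_{\mathbb{Z}_{q^e}}$, whence $T_{\mathbb{Z}_{q^e}}=Q_{\mathbb{Z}_{q^e}}$. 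Chaining the two equivalences proves the displayed identity and completes the argument.

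I expect the main obstacle to be the comparison of quadratic residuosity across the two levels $\mathbb{Z}_{q^e}^{*}$ and $\mathbb{Z}_{q}^{*}$ in the first key step; the $e$-odd simplification and the $-1$ symmetry are routine once that compatibility is secured. The cleanest justification is the generator-to-generator observation above, whose crux is that the kernel $q^{e-1}$ of the reduction map is odd, so that the parity of the exponent — and hence square-ness — is transported faithfully between the two unit groups.
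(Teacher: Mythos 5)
Your proof is correct, and it follows the same overall skeleton as the paper's: adjacency in $G_{q^e}$ means $\chi_{q^e}(x-y)=1$; since $e$ is odd this reduces to $(\frac{x-y}{q})=1$; that condition is then identified with membership in $Q_{\mathbb{Z}_{q^e}}$; and finally $q\equiv 1\pmod 4$ gives $-1\in Q_{\mathbb{Z}_{q^e}}$, hence $T_{\mathbb{Z}_{q^e}}=Q_{\mathbb{Z}_{q^e}}$. The genuine difference is in the middle step: the paper invokes Hensel's lemma to pass between squares modulo $q$ and squares modulo $q^e$, whereas you argue through the cyclic structure of $\mathbb{Z}_{q^e}^{*}$ and the reduction homomorphism onto $\mathbb{Z}_{q}^{*}$, tracking the parity of the exponent with respect to compatible generators. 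Your route is self-contained elementary group theory and reuses exactly the structural fact (cyclicity of $\mathbb{Z}_{q^e}^{*}$) that the paper needs anyway in Proposition~\ref{prop:deg} to count $|Q_{\mathbb{Z}_{q^e}}|$, so nothing external is cited; the paper's route is shorter because it outsources the lifting to a standard reference. Two small corrections to your emphasis, neither affecting validity: a generator maps to a generator simply because the homomorphism is surjective, and the parity transport works because both group orders (in particular $q-1$) are even, exactly as you say in the body of your argument --- the oddness of the kernel $q^{e-1}$ is not actually the crux (though it yields an alternative correct proof, since a kernel of odd order consists of squares, forcing the preimage of the quadratic residues to be exactly $Q_{\mathbb{Z}_{q^e}}$). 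Finally, your displayed equivalence is stated for all $z\in\mathbb{Z}_{q^e}$ but argued only for units; for non-units both sides fail trivially, since $\chi_{q^e}(z)=0$ and $T_{\mathbb{Z}_{q^e}}$ contains only units, so this is a one-line addendum rather than a gap.
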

\begin{proof}
By the definition of $G_{q^e}$, two distinct vertices $x$ and $y$ are adjacent in $G_{q^e}$ if and only if $\chi_{q^e}(x-y)=1$.
Since $e$ is odd, $\chi_{q^e}(x-y)=1$ if and only if $(\frac{x-y}{q})=1$, that is, $x-y$ is a nonzero square modulo $q$. 
From the Hensel's lemma (see e.g. \cite[Chapter 13]{W12}), $x-y$ is a nonzero square modulo $q$ if and only if $x-y \in Q_{\mathbb{Z}_{q^e}}$.
Finally, since $q \equiv 1 \pmod{4}$, $-1$ is a nonzero square modulo $q$ and so $-1 \in Q_{\mathbb{Z}_{q^e}}$, which implies that $T_{\mathbb{Z}_{q^e}}=Q_{\mathbb{Z}_{q^e}}$.
\end{proof}

By Proposition~\ref{prop:square}, we see the following proposition.

\begin{proposition}
\label{prop:deg}
\begin{enumerate}
\item[$(1)$] $G_{q^e}$ has $q^e$ vertices.
\item[$(2)$] $G_{q^e}$ is a $(q^e-q^{e-1})/2$-regular graph.
\end{enumerate}
\end{proposition}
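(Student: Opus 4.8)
The plan is to treat the two parts separately, the first being immediate and the second reducing to a counting problem for units and their squares in $\mathbb{Z}_{q^e}$. For part $(1)$, the vertex set of $G_{q^e}$ is $\mathbb{Z}_{q^e}$ by Definition~\ref{def:const}, and $|\mathbb{Z}_{q^e}| = q^e$, so the claim is immediate.

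For part $(2)$, I would invoke Proposition~\ref{prop:square}, which identifies $G_{q^e}$ with the Cayley graph $Cay(\mathbb{Z}_{q^e}, T_{\mathbb{Z}_{q^e}})$. Since a Cayley graph with connection set $S$ (symmetric and not containing $0$) is $|S|$-regular, it suffices to compute $|T_{\mathbb{Z}_{q^e}}|$. From the proof of Proposition~\ref{prop:square}, the hypothesis $q \equiv 1 \pmod{4}$ forces $-1 \in Q_{\mathbb{Z}_{q^e}}$, so that $T_{\mathbb{Z}_{q^e}} = Q_{\mathbb{Z}_{q^e}}$. The problem therefore reduces to counting the squares of units in $\mathbb{Z}_{q^e}$.

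The key step is this count. The unit group $(\mathbb{Z}_{q^e})^{*}$ has order $\phi(q^e) = q^{e-1}(q-1)$, and since $q$ is an odd prime this group is cyclic. The squaring endomorphism $x \mapsto x^2$ on a cyclic group has kernel $\{\pm 1\}$ precisely when the group order is even; because $q \equiv 1 \pmod{4}$, the order $q^{e-1}(q-1)$ is indeed even, so the image $Q_{\mathbb{Z}_{q^e}}$ contains exactly half of the units. This yields $|Q_{\mathbb{Z}_{q^e}}| = \phi(q^e)/2 = (q^e - q^{e-1})/2$, which is the asserted degree.

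I anticipate no serious obstacle: the argument is a direct application of the Cayley-graph degree formula followed by a standard square-counting argument. The only care needed is in correctly invoking the cyclicity of $(\mathbb{Z}_{q^e})^{*}$ and the parity of its order, both of which follow from elementary number theory together with the standing hypothesis $q \equiv 1 \pmod{4}$.
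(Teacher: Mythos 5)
Your proposal is correct and follows essentially the same route as the paper: part $(1)$ directly from Definition~\ref{def:const}, and part $(2)$ by reducing via Proposition~\ref{prop:square} to counting $|Q_{\mathbb{Z}_{q^e}}|$ in the cyclic unit group of order $\varphi(q^e)=q^e-q^{e-1}$. The only cosmetic difference is that you count squares via the kernel of the squaring endomorphism, while the paper lists the even powers of a fixed generator; both give $|Q_{\mathbb{Z}_{q^e}}|=\varphi(q^e)/2$.
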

\begin{proof}
$(1)$ is directly obtained from Definition~\ref{def:const}.
We prove $(2)$.
By Proposition~\ref{prop:square}, we see that $G_{q^e}$ is $|T_{\mathbb{Z}_{q^e}}|$-regular and so we shall compute the size of $T_{\mathbb{Z}_{q^e}}=Q_{\mathbb{Z}_{q^e}}$.
Let $\mathbb{Z}_{q^e}^*$ be the unit group of $\mathbb{Z}_{q^e}$. 
Note that $\mathbb{Z}_{q^e}^*$ is the cyclic group of order $\varphi(q^e)=q^e-q^{e-1}$ where $\varphi$ is the Euler's totient function.
Let $x$ be a generator of $\mathbb{Z}_{q^e}^*$.
Clearly, $Q_{\mathbb{Z}_{q^e}}=\{x^{2a} \mid 1 \leq a \leq (q^e-q^{e-1})/2\}$, completing the proof.
\end{proof}
Now we prove the following theorem.
\begin{theorem}
\label{thm:main}
For every $t \geq 1$, $G_{q^e}$ is $t$-e.c. if $q$ and $e$ satisfy
\begin{align}
\label{eq:main}
q^e-(t2^{t-1}-2^t+1)q^{e-\frac{1}{2}}-t2^tq^{e-1}+t2^{t-1}>0.
\end{align}
\end{theorem}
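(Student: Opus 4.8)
The plan is to fix arbitrary disjoint $A,B\subseteq\mathbb{Z}_{q^e}$ with $|A\cup B|=t$ and to count, by quadratic character sums, the number $N$ of vertices $z$ that are adjacent to every vertex of $A$ and to no vertex of $B$; it then suffices to show $N\geq 1$ whenever \eqref{eq:main} holds. Write $\chi:=\chi_{q^e}$. Since $z$ is adjacent to $v$ exactly when $\chi(z-v)=1$, and since $\chi(z-v)^2$ is the indicator that $z-v$ is a unit (that is, $q\nmid z-v$), the quantity $\tfrac{\chi(z-v)+\chi(z-v)^2}{2}$ indicates ``$z$ adjacent to $v$'' and $\tfrac{2-\chi(z-v)-\chi(z-v)^2}{2}$ indicates ``$z$ not adjacent to $v$''; the squared terms are exactly what absorbs the non-unit case $q\mid z-v$ (where $\chi=0$). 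First I would set
\[
N=\sum_{z\in\mathbb{Z}_{q^e}}\ \prod_{a\in A}\frac{\chi(z-a)+\chi(z-a)^2}{2}\ \prod_{b\in B}\frac{2-\chi(z-b)-\chi(z-b)^2}{2},
\]
note that each $z\in A$ contributes $0$ because its own factor vanishes, and record that at most $|B|\le t$ indices $z\in B$ may be spuriously counted and must be removed at the end.

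Next I would expand the product and separate the constant term, whose sum over $z$ gives the main term $q^{e}/2^{t}$. Each remaining term is, up to a sign and a power of $2$, a sum $\sum_{z}\chi\bigl(\prod_{v\in S}(z-v)\bigr)$ over a nonempty $S\subseteq A\cup B$, possibly carrying extra factors $\chi(z-v)^2$. The decisive point is that $\chi$ factors through reduction modulo $q$, so for squarefree products
\[
\sum_{z\in\mathbb{Z}_{q^e}}\chi\Bigl(\prod_{v\in S}(z-v)\Bigr)=q^{e-1}\sum_{w\in\mathbb{F}_q}\left(\frac{\prod_{v\in S}(w-\bar v)}{q}\right),
\]
with $\bar v$ the reduction of $v$ modulo $q$; Weil's bound for quadratic character sums bounds the right-hand side by $(|S|-1)\sqrt{q}\,q^{e-1}$ in absolute value. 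Summing $(|S|-1)$ over all nonempty $S$ gives $\sum_{k=1}^{t}\binom{t}{k}(k-1)=t2^{t-1}-2^{t}+1$, which is precisely the coefficient of the $q^{e-\frac12}$ term in \eqref{eq:main}.

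It then remains to control the terms carrying at least one factor $\chi(z-v)^2$: such a factor confines $z$ to the fiber $\{z:\,q\mid z-v\}$ of size $q^{e-1}$, and bounding the total of these contributions is what produces the $t2^{t}q^{e-1}$ correction, while discarding the spurious indices $z\in B$ contributes the final $+t2^{t-1}$. Assembling the estimates yields
\[
2^{t}N\ \geq\ q^{e}-(t2^{t-1}-2^{t}+1)q^{e-\frac12}-t2^{t}q^{e-1}+t2^{t-1},
\]
so \eqref{eq:main} forces $N\geq 1$, giving the required vertex $z$ and hence the $t$-e.c. property.

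I expect the main obstacle to be the passage from the ring $\mathbb{Z}_{q^e}$ to Weil's bound over $\mathbb{F}_q$ together with the bookkeeping of the non-unit contributions: one must track how the factors $\chi(z-v)^2$ interact with the genuine character factors, treat any coincidences among the residues $\bar v$ (which is exactly where the fibers of size $q^{e-1}$ enter), and keep each constant sharp enough to reproduce the coefficients in \eqref{eq:main}. Once the Weil estimate is available the character-sum part is routine; the delicate fiber accounting is what will demand the most care.
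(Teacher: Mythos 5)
Your setup and overall route are essentially the paper's: the paper uses factors $1\pm\chi_{q^e}(z-c)$ and excises from the summation range the set of $z$ with $q\mid z-c$ for some $c\in A\cup B$, while you keep all $z$ and build the non-unit case into exact indicators; both arguments then expand, reduce $\chi_{q^e}$ to its conductor $q$, and invoke the Weil bound, and your binomial count $\sum_{k}\binom{t}{k}(k-1)=t2^{t-1}-2^{t}+1$ is exactly the paper's. The genuine gap sits at the step you defer as ``delicate fiber accounting'': applying
\[
\sum_{z\in\mathbb{Z}_{q^e}}\chi\Bigl(\prod_{v\in S}(z-v)\Bigr)=q^{e-1}\sum_{w\in\mathbb{F}_q}\left(\frac{\prod_{v\in S}(w-\bar v)}{q}\right)
\]
together with Weil requires the reductions $\bar v$ to be pairwise \emph{distinct} in $\mathbb{F}_q$. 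The elements of $A\cup B$ are distinct in $\mathbb{Z}_{q^e}$, but for $e\ge 2$ two of them may coincide modulo $q$ (e.g.\ $0$ and $q$); then the reduced polynomial is not squarefree and the bound fails badly: for $S=\{0,q\}$ the right-hand side is $q^{e-1}\sum_{w\in\mathbb{F}_q}\left(\frac{w^2}{q}\right)=q^{e-1}(q-1)$, of the same order $q^{e}$ as your main term, not $O(q^{e-\frac12}+q^{e-1})$. No fiber bookkeeping can absorb a term of that size. (A smaller slip: in your normalization a spurious $z=b\in B$ contributes up to $2^{t}$, not $2^{t-1}$, since its own factor is $2-0-0=2$; but that only perturbs the constant term.)

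Worse, this gap cannot be closed, because the statement being proved is false for $e\ge 3$. Since $\chi_{q^e}$ has conductor $q$, adjacency in $G_{q^e}$ depends only on $(x-y)\bmod q$, so vertices congruent modulo $q$ have identical neighbourhoods; taking $A=\{0\}$, $B=\{q\}$, your count is exactly $N=0$ — no vertex is adjacent to $0$ but not to $q$ — so $G_{q^e}$ is not even $2$-e.c.\ when $e\ge 2$, however large $q$ is. (The paper's own proof founders at the same point: its Lemma~\ref{lem:weil} is false when the $a_i$ collide modulo $q$, since its proof implicitly assumes the reduced points stay distinct, yet it is applied to arbitrary $t$-subsets of $\mathbb{Z}_{q^e}$.) Your instinct that the residue coincidences are the crux was exactly right, but they are not a technical nuisance to be tracked — they are a counterexample; the argument is sound only for $e=1$ (Paley graphs), or under the additional hypothesis that the elements of $A\cup B$ are pairwise distinct modulo $q$.
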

To prove the Theorem~\ref{thm:main}, we use a slight generalization of the method in \cite{BEH81}, \cite{BT81} and \cite{GS71} using character sums estimation over finite fields. 
This method was found to prove that Paley graphs are $t$-e.c. graphs.
Similar methods were also applied for other Cayley graphs over finite fields in e.g. \cite{AC06} and \cite{KP04}.   
Based on their discussion, we shall prove that
\begin{equation}
\label{eq:prf1}
f(A, B):=\sum_{z \in \mathbb{Z}_{q^e}\setminus Z_{A, B}} \prod_{a\in A}\{1+\chi_{q^e}(z-a)\}\prod_{b\in B}\{1-\chi_{q^e}(z-b)\} >0
\end{equation}
for all disjoint subsets $A, B \subset \mathbb{Z}_{q^e}$ such that $|A\cup B|=t$ if (\ref{eq:main}) holds.
Here $Z_{A,B}$ is the set of elements $z$ such that $z-c=qv$ for some $c \in A \cup B$ and $v \in \mathbb{Z}_{q^e}$. 
Remark that, in the range of $z$ in the first sum, we must exclude the elements of $Z_{A, B}$ since, if $z-c=qv$ for some $c \in A \cup B$ and $v \in \mathbb{Z}_{q^e}$, then 
$z$ cannot satisfy the definition of the $t$-e.c. property. 
In fact, if so, from the definition of $\chi_{q^e}$, $z$ cannot be adjacent to any $c \in A\cup B$ in $G_{q^e}$.
Now let $Z_{A,B}^*=Z_{A,B} \setminus  (A\cup B)$ and 
\[
g(A, B):=\sum_{z \in \mathbb{Z}_{q^e}\setminus Z_{A, B}^*} \prod_{a\in A}\{1+\chi_{q^e}(z-a)\}\prod_{b\in B}\{1-\chi_{q^e}(z-b)\}.
\]
Note that, in the range of $z$ in the first sum, the set $A \cup B$ is added. 
To obtain (\ref{eq:prf1}), we shall obtain a lower bound of $g(A, B)$. 
To explain why, let 
\[
h(A, B):=\sum_{z \in A \cup B} \prod_{a\in A}\{1+\chi_{q^e}(z-a)\}\prod_{b\in B}\{1-\chi_{q^e}(z-b)\}.
\]
Then we can easily see that 
\begin{equation}
\label{eq:h(A,B)}
h(A,B)\leq t2^{t-1}.
\end{equation} 
We also see that
\begin{equation}
\label{eq:fgh}
f(A,B)=g(A,B)-h(A,B) 
\end{equation}
since $\mathbb{Z}_{q^e}\setminus Z_{A, B}=(\mathbb{Z}_{q^e}\setminus Z_{A, B}^*) \setminus (A\cup B)$. 
So, by combining that lower bound of $g(A, B)$, (\ref{eq:h(A,B)}) and (\ref{eq:fgh}), we will get (\ref{eq:prf1}).
To get a lower bound of $g(A, B)$, at first, we give the following character sum estimation over $\mathbb{Z}_{q^e}$ by combining a known character sum estimation elementary number-theoretic observations.  
\begin{lemma}
\label{lem:weil}
Let $k\geq 1$ be a integer and $a_1, a_2, \ldots, a_k$ be distinct elements of $\mathbb{Z}_{q^e}$.
Then, 
\begin{align}
\label{eq:weil}
\Bigl|\sum_{x \in \mathbb{Z}_{q^e}}\chi_{q^e}(x-a_1)\cdots \chi_{q^e}(x-a_k) \Bigr| \leq (k-1)q^{e-\frac{1}{2}}.
\end{align}
\end{lemma}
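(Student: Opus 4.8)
The proof rests on two ingredients, exactly matching the description in the text: an \emph{elementary} reduction of the sum over $\mathbb{Z}_{q^e}$ to a character sum over the field $\mathbb{Z}_q$, and a \emph{known} character sum estimate (the Weil bound) for the latter. The key elementary observation is that, because $e$ is odd, $\chi_{q^e}$ factors through reduction modulo $q$: indeed $\chi_{q^e}(x)=(\frac{x}{q})^e=(\frac{x}{q})$, so $\chi_{q^e}(x)$ depends only on $x \bmod q$. Writing $\bar a_j$ for the image of $a_j$ under the reduction map $\pi:\mathbb{Z}_{q^e}\to\mathbb{Z}_q$, and noting that every fiber of $\pi$ has exactly $q^{e-1}$ elements, I would split the outer sum according to the residue $r=\pi(x)$ to obtain
\[
\sum_{x\in\mathbb{Z}_{q^e}}\prod_{j=1}^k\chi_{q^e}(x-a_j)=q^{e-1}\sum_{r\in\mathbb{Z}_q}\prod_{j=1}^k\Bigl(\frac{r-\bar a_j}{q}\Bigr).
\]
All the multiplicative content of the original sum is thereby concentrated into the scalar $q^{e-1}$, leaving a genuine character sum over $\mathbb{Z}_q$.

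The second step is to rewrite this inner sum using multiplicativity of the Legendre symbol. Setting $F(X)=\prod_{j=1}^k(X-\bar a_j)\in\mathbb{Z}_q[X]$ and letting $\chi=(\frac{\cdot}{q})$ denote the quadratic character (with $\chi(0)=0$), one has $\prod_j(\frac{r-\bar a_j}{q})=\chi(F(r))$ for every $r$, so the inner sum equals $\sum_{r\in\mathbb{Z}_q}\chi(F(r))$. Here I would invoke the Weil bound: if $F$ is not a constant multiple of the square of a polynomial and has $s$ distinct roots, then $|\sum_{r\in\mathbb{Z}_q}\chi(F(r))|\le(s-1)\sqrt q$. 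When the reductions $\bar a_1,\dots,\bar a_k$ are pairwise distinct, $F$ is squarefree of degree $k$, hence certainly not a square, and $s=k$; the Weil estimate then gives $(k-1)\sqrt q$, and multiplying through by the factor $q^{e-1}$ yields the claimed bound $(k-1)q^{e-\frac{1}{2}}$.

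The main obstacle is the non-degeneracy hypothesis demanded by the Weil bound, namely that $F$ is not a perfect square; this is precisely the point at which distinctness of the $a_j$ must be read \emph{at the level of residues modulo $q$}. If the multiset $\{\bar a_j\}$ had every value occurring with even multiplicity, then $F$ would be a square, the inner sum would be of size $\approx q$ rather than $O(\sqrt q)$, and the estimate would fail. I would therefore take care to record that the relevant distinctness is that of the $\bar a_j$ in $\mathbb{Z}_q$ (guaranteeing $F$ squarefree), and to quote the Weil theorem in its sharp form counting distinct roots rather than the degree, so that the constant $(k-1)$ emerges correctly. The remaining pieces — the equal-fiber counting and the multiplicativity rewriting — are routine bookkeeping once this condition is in place.
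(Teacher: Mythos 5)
Your proposal takes essentially the same route as the paper's own proof: reduce the sum over $\mathbb{Z}_{q^e}$ to a character sum over $\mathbb{Z}_q$ by observing that $\chi_{q^e}$ has conductor $q$ and that every fiber of the reduction map $\mathbb{Z}_{q^e}\to\mathbb{Z}_q$ has exactly $q^{e-1}$ elements, then invoke the Weil-type estimate $(k-1)\sqrt q$ over $\mathbb{Z}_q$ (which the paper quotes from Schmidt's book as ``Burgess's estimation''). Up to that point the two arguments are identical, and your execution of them is correct.

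However, the caveat you raise --- that the non-degeneracy needed for the Weil bound is distinctness of the reductions $\bar a_j$ in $\mathbb{Z}_q$, not of the $a_j$ in $\mathbb{Z}_{q^e}$ --- is not mere bookkeeping: it exposes a genuine gap, and the gap is in the paper's lemma, not in your argument. For $e\geq 2$ the lemma's hypothesis does not imply your condition, and the stated bound is in fact false. Take $k=2$, $a_1=0$, $a_2=q$: these are distinct in $\mathbb{Z}_{q^e}$, yet $\chi_{q^e}(x-q)=\chi_{q^e}(x)$ for all $x$, so
\[
\sum_{x\in\mathbb{Z}_{q^e}}\chi_{q^e}(x-a_1)\,\chi_{q^e}(x-a_2)=\sum_{x\in\mathbb{Z}_{q^e}}\chi_{q^e}(x)^2=q^e-q^{e-1},
\]
which exceeds $(k-1)q^{e-\frac12}=q^{e-\frac12}$ for every $q\geq 3$. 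The paper's proof misses this because, after establishing the reduction (\ref{eq:weil2}), it applies (\ref{eq:burgess}) to $a_1,\dots,a_k$ as though they were still distinct modulo $q$. The flaw propagates downstream: in Lemma~\ref{lem:g(A,B)} the estimate is applied to arbitrary distinct $c_{i_1},\dots,c_{i_k}\in A\cup B\subset\mathbb{Z}_{q^e}$, which may collide modulo $q$, and indeed $G_{q^e}$ fails even to be $2$-e.c.\ when $e\geq 3$: with $A=\{0\}$ and $B=\{q\}$, any vertex $z$ adjacent to $0$ satisfies $\chi_{q^e}(z-q)=\chi_{q^e}(z)=1$ and hence is also adjacent to $q$. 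So the statement your proof actually establishes --- the bound under the hypothesis that the $a_j$ are distinct modulo $q$ --- is the correct form of the lemma, and you were right to insist on reading the distinctness at the level of residues.
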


\begin{proof}[Proof of Lemma~\ref{lem:weil}]
We shall prove that 
\begin{align}
\label{eq:weil2}
\sum_{x \in \mathbb{Z}_{q^e}}\chi_{q^e}(x-a_1)\cdots \chi_{q^e}(x-a_k)=q^{e-1}\sum_{x \in \mathbb{Z}_q}\chi_{q}(x-a_1)\cdots \chi_{q}(x-a_k)
\end{align}
since we can use the following Burgess's estimation (see e.g. \cite[Chapter II.2]{S76}); 
\begin{align}
\label{eq:burgess}
\Bigl|\sum_{x \in \mathbb{Z}_q}\chi_q(x-a_1)\cdots \chi_{q}(x-a_k) \Bigr| \leq (k-1)\sqrt{q}.
\end{align}
First, $\chi_{q^e}$ is a Dirichlet character modulo $q^e$ of conductor $q$, that is,
$
\chi_{q^e}(x)=\chi_{q^e}(y)
$
whenever $x \equiv y \pmod{q}$. 
So $\chi_{q^e}$ can be regarded as the primitive Dirichlet character $\chi_{q}$ modulo $q$. 
Next observe that, for any $x \in \mathbb{Z}_{q^e}$, there uniquely exist $a_0, a_1, \ldots, a_{e-1} \in \mathbb{Z}_q$ such that $x=a_0+a_1q+a_2q^2+\cdots+a_{e-1}q^{e-1}$.
Therefore, for any $a \in \mathbb{Z}_{q}$, there are $q^{e-1}$ elements $x \in \mathbb{Z}_{q^e}$ such that $\chi_{q^e}(x)=\chi_{q^e}(a)$, completing the proof. 
\end{proof}
Now we can get the following lower bound of $g(A, B)$. 
\begin{lemma}
\label{lem:g(A,B)}
\begin{align}
\label{eq:prf2}
g(A,B) \geq q^e-(t2^{t-1}-2^t+1)q^{e-\frac{1}{2}}-t2^{t}q^{e-1}+t2^{t}.
\end{align}
\end{lemma}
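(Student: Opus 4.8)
The plan is to expand the product in the definition of $g(A,B)$ into a signed sum of pure character products, peel off the constant main term $q^e$, estimate every remaining character sum by Lemma~\ref{lem:weil}, and pay only a cheap correction for having removed $Z_{A,B}^{*}$ from the summation range. Since $A$ and $B$ are disjoint, for each $z$ one has
\[
\prod_{a\in A}\{1+\chi_{q^e}(z-a)\}\prod_{b\in B}\{1-\chi_{q^e}(z-b)\}=\sum_{S\subseteq A\cup B}(-1)^{|S\cap B|}\prod_{c\in S}\chi_{q^e}(z-c),
\]
where the sign $(-1)^{|S\cap B|}$ records the factors taken from $B$.

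It is convenient to sum this identity first over all of $\mathbb{Z}_{q^e}$ and then subtract the contribution of $Z_{A,B}^{*}$. Writing $G_1$ and $G_2$ for the sums of $\prod_{a\in A}\{1+\chi_{q^e}(z-a)\}\prod_{b\in B}\{1-\chi_{q^e}(z-b)\}$ over $z\in\mathbb{Z}_{q^e}$ and over $z\in Z_{A,B}^{*}$ respectively, one has $g(A,B)=G_1-G_2$, so it suffices to bound $G_1$ from below and $G_2$ from above. For $G_1$, the term $S=\emptyset$ contributes $q^e$, while each $S\neq\emptyset$ gives a character sum with $|\sum_{z\in\mathbb{Z}_{q^e}}\prod_{c\in S}\chi_{q^e}(z-c)|\le(|S|-1)q^{e-1/2}$ by Lemma~\ref{lem:weil}. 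Hence
\[
G_1\ge q^e-q^{e-\frac{1}{2}}\sum_{\emptyset\neq S\subseteq A\cup B}(|S|-1)=q^e-(t2^{t-1}-2^t+1)q^{e-\frac{1}{2}},
\]
the evaluation of the weight following from $\sum_{S\subseteq A\cup B}|S|=t2^{t-1}$ and $\sum_{S\subseteq A\cup B}1=2^t$.

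For $G_2$ I would use that every factor $1\pm\chi_{q^e}(\cdot)$ lies in $\{0,1,2\}$, so each summand is nonnegative and at most $2^t$; thus $0\le G_2\le 2^t|Z_{A,B}^{*}|$. As $A\cup B\subseteq Z_{A,B}$ and $Z_{A,B}$ is a union of at most $t$ residue classes modulo $q$ of size $q^{e-1}$ each, we get $|Z_{A,B}^{*}|=|Z_{A,B}|-t\le tq^{e-1}-t$, whence $G_2\le t2^tq^{e-1}-t2^t$. Combining, $g(A,B)=G_1-G_2\ge q^e-(t2^{t-1}-2^t+1)q^{e-\frac{1}{2}}-t2^tq^{e-1}+t2^t$, which is exactly (\ref{eq:prf2}).

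The argument is essentially bookkeeping, and the main obstacle is to get the two combinatorial weights right: the factor $t2^{t-1}-2^t+1$ emerging from summing $(|S|-1)$ over nonempty $S$ against the Weil/Burgess bound, and the crude estimate $2^t(tq^{e-1}-t)$ for the deleted range. The only subtle point is that Lemma~\ref{lem:weil} is invoked for the distinct elements of each $S\subseteq A\cup B$, and that the nonnegativity of the integrand lets the deletion of $Z_{A,B}^{*}$ be absorbed into an upper bound on $G_2$ without any cancellation analysis.
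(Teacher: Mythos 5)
Your proof is correct and is essentially the paper's argument: both expand the product over subsets $S \subseteq A\cup B$, apply Lemma~\ref{lem:weil} to the complete character sums (yielding the weight $t2^{t-1}-2^t+1$), and pay the trivial correction $2^t(tq^{e-1}-t)$ for the excluded set $Z_{A,B}^*$. The only difference is bookkeeping: the paper restricts each expanded character sum to $\mathbb{Z}_{q^e}\setminus Z_{A,B}^*$ and corrects term by term (plus a separate count of the main term), whereas you split off the sum over $Z_{A,B}^*$ before expanding and bound it at once using $0\leq \prod_{a}\{1+\chi_{q^e}(z-a)\}\prod_{b}\{1-\chi_{q^e}(z-b)\}\leq 2^t$ — the same estimates arranged in a different order, arriving at the identical bound.
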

\begin{proof}[Proof of Lemma~\ref{lem:g(A,B)}]
First, we obtain that
\begin{equation}
\label{eq:prf3}
\sum_{z \in \mathbb{Z}_{q^e}\setminus Z_{A, B}^*} 1\geq q^e-tq^{e-1}+t
\end{equation}
since $|Z_{A, B}| \leq t(q^e-\phi(q^e))=tq^{e-1}$ and
$\mathbb{Z}_{q^e}\setminus Z_{A, B}^*$ contains $A \cup B$.
\par Now let $A\cup B=\{c_1, c_2, \ldots, c_t\}$. 
From the definition of $g(A, B)$ and the triangle inequality, we see that
\begin{align}
\label{eq:prf4}
\begin{split}
\biggl|g(A, B)-\sum_{z \in \mathbb{Z}_{q^e}\setminus Z_{A, B}^*} 1 \biggr|
=\sum_{1 \leq k \leq t}\; \sum_{1 \leq i_1<i_2<\cdots<i_k \leq t} \; \biggl|\sum_{z \in \mathbb{Z}_{q^e}\setminus Z_{A, B}^*} \chi_{q^e}(z-c_{i_1})\cdots\chi_{q^e}(z-c_{i_k}) \biggr|.
\end{split}
\end{align}
For each $1 \leq k \leq t$ and $1 \leq i_1<i_2<\cdots<i_k \leq t$, we obtain 
\begin{equation}
\label{eq:prf5}
\biggl|\sum_{z \in \mathbb{Z}_{q^e}\setminus Z_{A, B}^*} \chi_{q^e}(z-c_{i_1})\cdots\chi_{q^e}(z-c_{i_k}) \biggr| \leq (k-1)q^{e-\frac{1}{2}}+tq^{e-1}-t.
\end{equation}
In fact, we get (\ref{eq:prf5}) since 
\begin{align*}
\sum_{z \in \mathbb{Z}_{q^e}\setminus Z_{A, B}^*} \chi_{q^e}(z-c_{i_1})\cdots\chi_{q^e}(z-c_{i_k})
&=\sum_{z \in \mathbb{Z}_{q^e}} \chi_{q^e}(z-c_{i_1})\cdots\chi_{q^e}(z-c_{i_k})\\
&-\sum_{z \in Z_{A, B}^*} \chi_{q^e}(z-c_{i_1})\cdots\chi_{q^e}(z-c_{i_k})
\end{align*}
and from Lemma~\ref{lem:weil} and the fact that $|Z_{A, B}^*|=|Z_{A, B}|-|A \cup B|\leq tq^{e-1}-t$.
Thus, by (\ref{eq:prf4}) and (\ref{eq:prf5}), 
\begin{align}
\label{eq:prf6}
\begin{split}
\biggl|g(A, B)-\sum_{z \in \mathbb{Z}_{q^e}\setminus Z_{A, B}^*} 1 \biggr| 
&\leq \sum_{1\leq k \leq t}\binom{t}{k}\{(k-1)q^{e-\frac{1}{2}}+tq^{e-1}-t\}\\
&=q^{e-\frac{1}{2}}t\sum_{0\leq k \leq t-1}\binom{t-1}{k}+(tq^{e-1}-t-q^{e-\frac{1}{2}})\sum_{1\leq k \leq t}\binom{t}{k}\\
&=t2^{t-1}q^{e-\frac{1}{2}}+(2^{t}-1)(tq^{e-1}-t-q^{e-\frac{1}{2}})\\
&=(t2^{t-1}-2^t+1)q^{e-\frac{1}{2}}+t(2^{t}-1)q^{e-1}-t(2^{t}-1).
\end{split}
\end{align}
By (\ref{eq:prf3}) and (\ref{eq:prf6}), we get (\ref{eq:prf2}).
\end{proof}

Now we are ready to prove the theorem.
\begin{proof}[Proof of Theorem~\ref{thm:main}]
By combining Lemma~\ref{lem:g(A,B)}, (\ref{eq:h(A,B)}) and (\ref{eq:fgh}),
\begin{align*}
f(A,B)&=g(A,B)-h(A,B) \\
&\geq q^e-(t2^{t-1}-2^t+1)q^{e-\frac{1}{2}}-t2^tq^{e-1}+t2^{t}-t2^{t-1}\\
&=q^e-(t2^{t-1}-2^t+1)q^{e-\frac{1}{2}}-t2^tq^{e-1}+t2^{t-1}.
\end{align*}
Thus (\ref{eq:prf1}) holds if (\ref{eq:main}) is satisfied. 
\end{proof}
On the other hand, we can completely determine the spectrum of $G_{q^e}$.
The following theorem is a corollary of Theorem 2.4 in Liu-Zhou~\cite{LZ15}.
Here we give an alternative proof based on elementary calculations and the known fact of quadratic Gauss sums.

\begin{theorem}
\label{thm:LZ15}
The eigenvalues of the adjacency matrix of $G_{q^e}$ are 
\[
\frac{q^e-q^{e-1}}{2}, \; \frac{-q^{e-1}+q^{e-\frac{1}{2}}}{2}, \; 0,\; \frac{-q^{e-1}-q^{e-\frac{1}{2}}}{2}.
\]
Especially, 
\[
\lambda(G_{q^e})=\frac{q^{e-\frac{1}{2}}+q^{e-1}}{2}.
\]
\end{theorem}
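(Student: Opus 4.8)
The plan is to use the Cayley-graph structure established in Proposition~\ref{prop:square}. Since $G_{q^e}=Cay(\mathbb{Z}_{q^e},T_{\mathbb{Z}_{q^e}})$ is a Cayley graph on the abelian group $\mathbb{Z}_{q^e}$ with symmetric connection set $T_{\mathbb{Z}_{q^e}}=Q_{\mathbb{Z}_{q^e}}$, its adjacency eigenvalues are the additive-character sums
\[
\lambda_j=\sum_{s\in Q_{\mathbb{Z}_{q^e}}}\psi_j(s),\qquad \psi_j(x)=\exp(2\pi\sqrt{-1}\,jx/q^e),\quad j\in\mathbb{Z}_{q^e}.
\]
First I would rewrite the membership indicator of $Q_{\mathbb{Z}_{q^e}}$ via $\chi_{q^e}$: for $s$ coprime to $q$ one has $\mathbf{1}[s\in Q_{\mathbb{Z}_{q^e}}]=\tfrac12(1+\chi_{q^e}(s))$, while $\chi_{q^e}(s)=0$ when $q\mid s$. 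This splits $\lambda_j$ into an elementary piece $\tfrac12\sum_{q\nmid s}\psi_j(s)$ and a Gauss-sum piece $\tfrac12\sum_{s\in\mathbb{Z}_{q^e}}\chi_{q^e}(s)\psi_j(s)$.

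The elementary piece I would evaluate by orthogonality of characters: subtracting the contribution of the multiples of $q$ yields $q^e\mathbf{1}[j=0]-q^{e-1}\mathbf{1}[q^{e-1}\mid j]$. For the Gauss-sum piece I would exploit that $\chi_{q^e}$ has conductor $q$ (as shown in the proof of Lemma~\ref{lem:weil}); writing $s=a+qb$ with $a\in\mathbb{Z}_q$ and $b\in\mathbb{Z}_{q^{e-1}}$ factorizes the sum into
\[
\Bigl(\sum_{a\in\mathbb{Z}_q}\chi_q(a)e^{2\pi\sqrt{-1}\,ja/q^e}\Bigr)\Bigl(\sum_{b\in\mathbb{Z}_{q^{e-1}}}e^{2\pi\sqrt{-1}\,jb/q^{e-1}}\Bigr).
\]
The $b$-sum vanishes unless $q^{e-1}\mid j$; and when $j=q^{e-1}m$ the $a$-sum becomes the classical quadratic Gauss sum, which equals $(\tfrac{m}{q})\sqrt{q}$ since $q\equiv1\pmod 4$. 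Hence the Gauss-sum piece is $(\tfrac{m}{q})q^{e-1/2}$ for $j=q^{e-1}m$ with $q\nmid m$, and $0$ otherwise.

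Adding the two pieces, I would split into three cases. For $j=0$ I recover the degree $\tfrac{q^e-q^{e-1}}{2}$. For $q^{e-1}\mid j$ with $j\ne 0$, writing $j=q^{e-1}m$, I obtain $\tfrac{-q^{e-1}+(\frac{m}{q})q^{e-1/2}}{2}$, which equals $\tfrac{-q^{e-1}+q^{e-1/2}}{2}$ or $\tfrac{-q^{e-1}-q^{e-1/2}}{2}$ according as $m$ is a quadratic residue or non-residue modulo $q$. For $q^{e-1}\nmid j$ both pieces vanish, giving $0$. This produces precisely the four listed eigenvalue values. To read off $\lambda(G_{q^e})$ I would note that the largest nontrivial eigenvalue is $\tfrac{-q^{e-1}+q^{e-1/2}}{2}$ and the least eigenvalue is $-\tfrac{q^{e-1}+q^{e-1/2}}{2}$, so that, by the definition of $\lambda(G_{q^e})$ as the larger of the second-largest eigenvalue and the negative of the least one, $\lambda(G_{q^e})=\tfrac{q^{e-1/2}+q^{e-1}}{2}$.

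I expect the Gauss-sum evaluation to be the main obstacle: one must reduce the sum modulo $q^e$ to one modulo $q$ through the conductor, correctly isolate the divisibility condition $q^{e-1}\mid j$ that governs non-vanishing, and invoke the classical determination of the sign of the quadratic Gauss sum (Gauss's theorem), which is exactly where the hypothesis $q\equiv1\pmod 4$ enters. The remaining case-bookkeeping, matching each residue class of $j$ to its eigenvalue, should be routine once these character sums are in hand.
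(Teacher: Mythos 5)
Your proof is correct, and it reaches the stated spectrum by a genuinely different decomposition than the paper uses. Both arguments start from the same fact (the eigenvalues of an abelian Cayley graph are the character sums $\sum_{s\in Q_{\mathbb{Z}_{q^e}}}\psi_j(s)$), but the paper then parametrizes the connection set by the squaring map, writing the sum as $\tfrac12\sum_{x\in\mathbb{Z}_{q^e}^*}\exp(2\pi i a x^2/q^e)$, and evaluates it by subtracting the non-unit contribution from a full quadratic Gauss sum modulo $q^e$; this forces it to invoke the evaluation $\sum_{x\in\mathbb{Z}_{q^k}}\exp(2\pi i b x^2/q^k)=\bigl(\tfrac{b}{q^k}\bigr)\sqrt{q^k}$ for prime-\emph{power} moduli, to handle the non-units by a further reduction modulo $q^{e-2}$, and to run a stratified case analysis over the $q$-adic valuation of $a$ (the cases $a=bq^{e-k}$), including bookkeeping of Jacobi-symbol identities such as $\bigl(\tfrac{b}{q^{k+2}}\bigr)=\bigl(\tfrac{b}{q^{k}}\bigr)$. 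You instead expand the indicator of $Q_{\mathbb{Z}_{q^e}}$ on the units as $\tfrac12(1+\chi_{q^e})$, splitting each eigenvalue into an orthogonality term plus the twisted sum $\tfrac12\sum_s\chi_{q^e}(s)\psi_j(s)$; the conductor-$q$ property (the same observation the paper uses inside Lemma~\ref{lem:weil}) factorizes the twisted sum into a geometric series, which produces the clean dichotomy $q^{e-1}\mid j$ versus $q^{e-1}\nmid j$, times a single classical Gauss sum modulo the \emph{prime} $q$, whose value $\sqrt{q}$ for $q\equiv 1\pmod 4$ is Gauss's sign theorem. What your route buys: you need only the prime-modulus Gauss sum rather than the prime-power formula of \cite[Theorem 1.5.2]{BEW98}, you avoid the recursive treatment of non-units entirely, and the origin of the four eigenvalue values (degree at $j=0$, the two values $\tfrac{-q^{e-1}\pm q^{e-1/2}}{2}$ at $j=q^{e-1}m$ with $q\nmid m$, and $0$ elsewhere) is more transparent. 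What the paper's route buys is that it works directly with the quadratic exponential sums as they arise and cites a standard reference evaluation wholesale, at the cost of a heavier case analysis.
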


\begin{proof}
As we proved in Proposition~\ref{prop:square}, $G_{q^e}$ is the Cayley graph over $\mathbb{Z}_{q^e}$ defined by the subset $T_{\mathbb{Z}_{q^e}}=Q_{\mathbb{Z}_{q^e}}$. 
It is not so difficult to show that the multi-set of eigenvalues of the adjacency matrix is 
\[
\biggl \{\sum_{s \in Q_{\mathbb{Z}_{q^e}}}\exp \biggl(\frac{2\pi ias}{q^e} \biggr) \mid a \in \mathbb{Z}_{q^e} \biggr\}
\]
where $i=\sqrt{-1}$.
Moreover, by the definition of $Q_{\mathbb{Z}_{q^e}}$, 
\begin{equation}
\label{eq:gauss1}
\sum_{s \in Q_{\mathbb{Z}_{q^e}}}\exp \biggl(\frac{2\pi ias}{q^e} \biggr)=\frac{1}{2} \sum_{x \in \mathbb{Z}_{q^e}^*} \exp\biggl(\frac{2\pi ia x^2}{q^e} \biggr).
\end{equation}
Thus we shall evaluate the exponential sum in the right-hand side in (\ref{eq:gauss1}) for $a \neq 0$.
Remark that the sum for $a=0$ is the largest eigenvalue, that is, the degree of $G_{q^e}$. 
Now we see that
\begin{equation}
\label{eq:gauss2}
\sum_{x \in \mathbb{Z}_{q^e}^*} \exp\biggl(\frac{2\pi ia x^2}{q^e} \biggr)
=\sum_{x \in \mathbb{Z}_{q^e}} \exp\biggl(\frac{2\pi ia x^2}{q^e} \biggr)
-\sum_{x \in \mathbb{Z}_{q^e} \setminus \mathbb{Z}_{q^e}^*} \exp\biggl(\frac{2\pi ia x^2}{q^e} \biggr).
\end{equation}
The first exponential sum in the right-hand side in (\ref{eq:gauss2}) is called a {\it quadratic Gauss sum}. 
For $1 \leq k \leq e$, if $a=bq^{e-k}$ for some $1 \leq b \leq q^k$ s.t. $(b, q)=1$, then 
\begin{align}
\begin{split}
\label{eq:gauss3}
\sum_{x \in \mathbb{Z}_{q^e}} \exp\biggl(\frac{2\pi ia x^2}{q^e} \biggr)&=\sum_{x \in \mathbb{Z}_{q^e}} \exp\biggl(\frac{2\pi ibq^{e-k} x^2}{q^e} \biggr)\\
&=q^{e-k}\sum_{x \in \mathbb{Z}_{q^k}} \exp\biggl(\frac{2\pi ibx^2}{q^k} \biggr)
=\biggl(\frac{b}{q^k} \biggr) q^{e-\frac{k}{2}}.
\end{split}
\end{align}
Here we use the following well known fact (see e.g. \cite[Theorem 1.5.2]{BEW98}); for $(b, q)=1$.
\begin{equation}
\sum_{x \in \mathbb{Z}_{q^k}} \exp\biggl(\frac{2\pi ibx^2}{q^{k}}\biggr)=\biggl(\frac{b}{q^k} \biggr) \sqrt{q^k}
\end{equation}
where $(\frac{\cdot}{\cdot})$ is the Jacobi symbol. 
On the other hand, if $x \in \mathbb{Z}_{q^e} \setminus \mathbb{Z}_{q^e}^*$, then $q|x$. Then,
\begin{align}
\label{eq:gauss4}
\begin{split}
\sum_{x \in \mathbb{Z}_{q^e} \setminus \mathbb{Z}_{q^e}^*} \exp\biggl(\frac{2\pi ia x^2}{q^e} \biggr)
&=\sum_{x=qy,\; y\in \mathbb{Z}_{q^{e-1}}} \exp\biggl(\frac{2\pi ia x^2}{q^e} \biggr)\\
&=\sum_{y\in \mathbb{Z}_{q^{e-1}}} \exp\biggl(\frac{2\pi iay^2}{q^{e-2}} \biggr)\\
&=p\sum_{y\in \mathbb{Z}_{q^{e-2}}} \exp\biggl(\frac{2\pi iay^2}{q^{e-2}} \biggr).
\end{split}
\end{align}
Thus, by (\ref{eq:gauss4}) and the discussion to obtain (\ref{eq:gauss3}), we get
\begin{align}
\label{eq:gauss5}
\begin{split}
\sum_{x \in \mathbb{Z}_{q^e} \setminus \mathbb{Z}_{q^e}^*} \exp\biggl(\frac{2\pi ia x^2}{q^e} \biggr)
=
\begin{cases}
q^{e-1}  &\text{$a=bq^{e-2}$, $b \neq 0$;}\\
\Bigl(\frac{b}{q^k} \Bigr) q^{e-1-\frac{k}{2}} & \text{$a=bq^{e-2-k}$, $(b, q)=1$, $1 \leq k \leq e-2$.}\\
\end{cases}
\end{split}
\end{align}
Thus, by (\ref{eq:gauss2}), (\ref{eq:gauss3}), (\ref{eq:gauss4}) and (\ref{eq:gauss5}), we obtain 
\begin{align}
\label{eq:gauss6}
\begin{split}
\sum_{x \in S} \exp\biggl(\frac{2\pi ia x^2}{q^e} \biggr)
=
\begin{cases}
\frac{q^e-q^e-1}{2}  &\text{$a=0$;}\\
\frac{\bigl(\frac{b}{q} \bigr) q^{e-\frac{1}{2}}-q^{e-1}}{2} & \text{$a=bq^{e-1}$, $(b, q)=1$;}\\
\frac{\bigl(\frac{b}{q^2} \bigr) q^{e-1}-q^{e-1}}{2} & \text{$a=bq^{e-2}$, $(b, q)=1$;}\\
\frac{\bigl(\frac{b}{q^{k+2}} \bigr) q^{e-1-\frac{k}{2}}-\bigl(\frac{b}{q^{k}} \bigr) q^{e-1-\frac{k}{2}}}{2} & \text{$a=bq^{e-2-k}$, $(b, q)=1$, $1 \leq k \leq e-2$.}\\
\end{cases}
\end{split}
\end{align}
It is easy to see that $\bigl(\frac{b}{q^2} \bigr)=1$ for every $q$ and $b$, and $\bigl(\frac{b}{q^{k+2}} \bigr)=\bigl(\frac{b}{q^{k}} \bigr)$ for every $q$, $b$ and $k$.
Thus, by (\ref{eq:gauss6}), we prove the theorem.
\end{proof}

\begin{proof}[Proof of Theorem~\ref{thm:mainthm}]
By Proposition~\ref{prop:deg} and Theorem~\ref{thm:LZ15}, we see that for each odd $e \geq 1$, $\lambda(G_{q^e})=\Omega(d_{q^e}^{1/2+\varepsilon})$ where
$d_{q^e}=(q^e-q^{e-1})/2$ is the degree of $G_{q^e}$ and $\varepsilon=(e-1)/2$.
Thus we obtain the theorem by Corollary~\ref{cor:exp} (2) and Theorem~\ref{thm:main}.
\end{proof}

\section{Concluding remarks}
\label{sec:conclusion}
In this note, we constructed families of $t$-e.c. graphs which are not families of best pseudo-random graphs with respect to edge probability $1/2-o(1)$.
At present, we do not know anything for cases of other edge probability.
It would be interesting to investigate such cases. 
\par Below we make some related concluding remarks. 
First, we remark on the relation between $t$-e.c. graphs and expander graphs. 
Here we define expander graphs following the manner in \cite{HLW06}. 
For a graph $G=(V, E)$, the {\it Cheeger constant} $h(G)$ of $G$ is defined as follows. 
\[
h(G):=\min\Bigl\{\frac{e(S, V\setminus S)|}{|S|} \mid Y\subset V,\; |S|\leq \frac{|V|}{2} \Bigr\}.
\]
We call a family $(G_i)_{i \geq 1}$ of graphs {\it a family of expander graphs} if $h(G_i) \geq \varepsilon$ for some $\varepsilon>0$ and any $i \geq 1$. 
For a $d$-regular graph $G$ on $n$ vertices, the Cheeger inequality shows that $h(G)\geq (d-\lambda_2(G))/2$ (see e.g. \cite{HLW06}). 
So a family $(G_i)_{i \geq 1}$ of $d_i$-regular graphs whose the {\it spectral gap} $d_i-\lambda_2(G_i)$ is not zero (or large) for any $i$ is a family of expander graphs. 
Especially a family of best pseudo-random regular graphs with respect to edge probability $d_i/n_i$ is a family of expander graphs with optimal spectral gaps (up to constant).
Theorem~\ref{thm:mainthm} implies that for any $t$, $t$-e.c. graphs do not necessarily ensure that they are the expander graphs with optimum spectral gaps (up to constant). 
\par Next, there is another typical properties of random graphs called the {\it quasi-random property} which was found by Chung-Graham-Wilson~\cite{CGW89}.
They showed the mutually equivalence of some properties which random graphs with constant edge probability $p$ a.a.s. satisfy. 
Here we refer the following property denoted by $P_3$ in \cite{CGW89} as the quasi-random property.
For a constant $0<p<1$, a family of graphs $\{G_i\}_{i\geq 1}$ with $n_i$ vertices has the {\it quasi-random property with respect to edge probability $p$} if 
\[
e(G_i)\geq pn_i^2+o(n_i^2), \; \lambda_1(G_i)=(1 + o(1))pn_i, \; \lambda_2(G_i)=o(n_i).
\]
Here $e(G_i)$ is the size of edge set of $G_i$.
From Proposition~\ref{prop:deg} and Theorem~\ref{thm:LZ15}, the constructed families satisfy the quasi-random property with respect to edge probability around $1/2$.
\par At last, in the research of $t$-e.c. graphs, one of main problems is constructing a family of $t$-e.c. graphs with $O(t2^{t})$ vertices when $t \rightarrow \infty$ (see e.g. \cite{B09}).
But Theorem~\ref{thm:main} only shows that for each $e\geq 3$, our construction provides families of $t$-e.c. graphs with $O(t^{2e}2^{2et})$ vertices when $t \rightarrow \infty$.
So it also would be interesting to improve Theorem~\ref{thm:main}.
\subsection*{Acknowledgements}
First, we would like to appreciate Masanori Sawa for introducing the concept of $t$-e.c. graphs.
We also would like to give special thanks to Gary Greaves for his comments leading to the remark on expander graphs in Section~\ref{sec:conclusion}.
At last, we would like to appreciate Sanming Zhou for telling us the paper~\cite{LZ15} and careful reading this note.
The author is supported by Grant-in-Aid for JSPS Fellows 18J11282 of the Japan Society for the Promotion of Science.


\end{document}